\newtheorem{corollary}{Corollary}%
\newtheorem{theorem}{Theorem}%
\newtheorem{remark}{Remark}
\begin{document}

\baselineskip=4.4mm

\makeatletter

\newcommand{\E}{\mathrm{e}\kern0.2pt} 
\newcommand{\D}{\mathrm{d}\kern0.2pt}
\newcommand{\RR}{\mathbb{R}}
\newcommand{\CC}{\mathbb{C}}%
\newcommand{\ii}{\kern0.05em\mathrm{i}\kern0.05em}

\renewcommand{\Re}{\mathrm{Re}} 
\renewcommand{\Im}{\mathrm{Im}}

\def\bottomfraction{0.9}

\title{\bf Characterizations of discs via weighted means}

\author{Nikolay Kuznetsov}

\date{}

\maketitle

\vspace{-6mm}

\begin{center}
Laboratory for Mathematical Modelling of Wave Phenomena, \\ Institute for Problems
in Mechanical Engineering, Russian Academy of Sciences, \\ V.O., Bol'shoy pr. 61, St
Petersburg 199178, Russian Federation \\ E-mail: nikolay.g.kuznetsov@gmail.com
\end{center}

\begin{abstract}
\noindent New theorems characterizing analytically discs in the Euclidean plane
$\RR^2$ are proved. Weighted mean value properties of solutions to the modified
Helmholtz equation and harmonic functions are used for this purpose. The presence of
a logarithmic weight diminish coefficients in the mean value identities. A weighted
mean is also valid for solutions of the Helmholtz equation.
\end{abstract}

\setcounter{equation}{0}

\section{Introduction}

In this note, we consider a weighted mean value property of real-valued solutions to
the two-dimensional modified Helmholtz equation
\begin{equation}
\nabla^2 v - \mu^2 v = 0 , \quad \mu \in \RR \setminus \{0\} ;
\label{MHh}
\end{equation}
$\nabla = (\partial_1, \partial_2)$ is the gradient operator, $\partial_i = \partial
/ \partial x_i$. This property is used for a new analytic characterization of discs
in the Euclidean plane $\RR^2$.

Unfortunately, it is not commonly known that solutions of \eqref{MHh} are called
panharmonic (or $\mu$-panharmonic) functions by analogy with harmonic functions
solving the Laplace equation. This convenient abbreviation coined by Duffin \cite{D}
will be used in what follows.

In the extensive survey article \cite{NV}, the authors treat various mean value
properties of harmonic and caloric functions, saying nothing about panharmonic ones.
Meanwhile, Duffin~\cite{D} derived the mean value identity over circumferences in
$\RR^2$ for solutions of \eqref{MHh} in 1970. His result is closely related to the
obtained in this note, and so it is formulated below, but before that we introduce
some notation.

Let $x = (x_1, x_2)$ be a point in $\RR^2$, by $D_r (x) = \{ y \in \RR^2 : |y-x| < r
\}$ we denote the open disc of radius $r$ centred at $x$. The disc is called
admissible with respect to a domain $\Omega \subset \RR^2$ provided $\overline{D_r
(x)} \subset \Omega$, whereas $\partial D_r (x) = S_r (x)$ is called the admissible
circumference in this case. If $\Omega$ has a finite Lebesgue measure and a function
$f$ is integrable over $\Omega$ (continuous on $\Omega$), then
\[ M (f, \Omega) = \frac{1}{|\Omega|} \int_{\Omega} f (x) \, \D x \quad
\left( M (f, S_r (x)) = \frac{1}{2 \pi r} \int_{S_r (x)} f (y) \, \D S_y \right)
\]
is its area mean value over $\Omega$ (its mean value over an admissible
circumference, respectively); here $|\Omega|$ is the area of $\Omega$.

Now, we are in a position to recall some results related to those obtained in this
note. We begin with mean value properties analogous to those valid for harmonic
functions; the latter were reviewed in \cite{NV}.

\begin{theorem}[Duffin \cite{D}, Kuznetsov \cite{Ku}]
Let $\Omega$ be a domain in $\RR^2$. If $v$ is panharmonic in~$\Omega$, then
\begin{equation}
M (v, S_r (x)) = a^\circ (\mu r) \, v (x) \ \ \mbox{and} \ \ M (v, D_r (x)) =
a^\bullet (\mu r) \, v (x) \label{MM'}
\end{equation}
for every admissible disc $D_r (x);$ here $a^\circ (t) = I_0 (t)$ and $a^\bullet
(t) = 2 \, t^{-1} I_1 (t);$ $I_\nu$ denotes the modified Bessel function of order
$\nu$.
\end{theorem}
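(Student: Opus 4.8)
The plan is to reduce the circumferential identity to an ordinary differential equation in the radius and then recover the area identity by integration. Fix $x$ and write $\phi(r) = M(v, S_r(x))$; after the substitution $y = x + r\omega$ with $\omega$ ranging over the unit circle, one has $\phi(r) = (2\pi)^{-1}\int_{|\omega|=1} v(x+r\omega)\,\D\omega$. Differentiating under the integral sign and recognising $\omega\cdot\nabla v$ as the outward normal derivative on $S_r(x)$, the divergence theorem together with the equation $\nabla^2 v = \mu^2 v$ gives $r\,\phi'(r) = \mu^2 \int_0^r \rho\,\phi(\rho)\,\D\rho$. Differentiating once more yields the modified Bessel equation $\phi'' + r^{-1}\phi' - \mu^2\phi = 0$, i.e.\ upon setting $t = \mu r$ the standard equation whose solutions are spanned by $I_0$ and $K_0$.

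Next I would fix the relevant solution by its behaviour at the centre. Since $v$ is continuous, $\phi(r)\to v(x)$ as $r\to 0$, while $K_0$ has a logarithmic singularity there; hence the $K_0$ component must vanish, and using $I_0(0)=1$ one obtains $\phi(r) = I_0(\mu r)\,v(x)$, which is the first identity in \eqref{MM'}. The area mean then follows by integrating in polar coordinates: $\int_{D_r(x)} v\,\D y = \int_0^r 2\pi\rho\,\phi(\rho)\,\D\rho = 2\pi v(x)\int_0^r \rho\, I_0(\mu\rho)\,\D\rho$. The Bessel identity $\frac{\D}{\D t}\bigl[t\, I_1(t)\bigr] = t\, I_0(t)$ evaluates this integral as $2\pi\mu^{-1} r\, I_1(\mu r)\,v(x)$; dividing by $|D_r(x)| = \pi r^2$ gives $M(v,D_r(x)) = 2(\mu r)^{-1} I_1(\mu r)\,v(x) = a^\bullet(\mu r)\,v(x)$, as claimed.

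The computations above are routine once the ODE is in hand, so the main point to justify carefully is the passage to that equation: interchanging differentiation with the surface integral and applying the divergence theorem both require sufficient regularity of $v$. This is not a genuine obstacle, since $v$, being a solution of an elliptic equation with constant coefficients, is real-analytic in $\Omega$, and for an admissible disc $\overline{D_r(x)}\subset\Omega$, so all manipulations are licit. An equivalent route, should the ODE bookkeeping prove awkward, is to observe that the angular average of $v$ about $x$ annihilates every nonzero Fourier mode of the separated solution, leaving only the radial, order-zero component $c\, I_0(\mu\rho)$, with $c = v(x)$ forced by regularity at $\rho = 0$; this reaches the same conclusion without differentiating twice.
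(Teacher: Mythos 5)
Your argument is correct in all its steps: the reduction of $\phi(r)=M(v,S_r(x))$ via the divergence theorem to $r\,\phi'(r)=\mu^2\int_0^r\rho\,\phi(\rho)\,\D\rho$, the resulting modified Bessel equation $\phi''+r^{-1}\phi'-\mu^2\phi=0$, the exclusion of the $K_0$ component by boundedness of $\phi$ at $r=0$ (with $\phi(0)=v(x)$ and $I_0(0)=1$ fixing the constant), and the passage to the area mean by radial integration using $\frac{\D}{\D t}[t\,I_1(t)]=t\,I_0(t)$, which indeed yields $a^\bullet(\mu r)=2(\mu r)^{-1}I_1(\mu r)$. Be aware, however, that this paper contains no proof of Theorem 1 to compare against: the statement is quoted with attributions to Duffin \cite{D} and Kuznetsov \cite{Ku}, the text noting only that \cite{Ku} reproves the circumferential identity in the $m$-dimensional setting and derives the area identity for the first time. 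Your spherical-means ODE reduction is precisely the classical route taken in that cited source, of which your computation is the two-dimensional specialization (in $m$ dimensions the mean satisfies the Euler--Poisson--Darboux-type equation $\phi''+(m-1)r^{-1}\phi'=\mu^2\phi$, whose solution regular at the origin is a modified Bessel function of order $(m-2)/2$, and the area identity again follows by integrating in the radius); so your proof buys self-containedness where the paper buys brevity by citation. Two implicit points you use are both legitimate and worth stating: integrating the circumferential identity over $\rho\in(0,r)$ requires that $D_\rho(x)$ be admissible for every $\rho\le r$, which holds since $\overline{D_r(x)}\subset\Omega$; and the interchange of $\D/\D r$ with the angular integral is justified by the interior real-analyticity of $v$, as you note. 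Your alternative closing remark via annihilation of the nonzero Fourier modes would, if used as the main argument, need the completeness of the separated solutions $I_n(\mu\rho)\,\E^{\ii n\theta}$ in the disc, so the ODE route you actually carry out is the cleaner one.
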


A new proof of the first identity \eqref{MM'} (originally due to Duffin) was
obtained in \cite{Ku} in the $m$-dimensional ($m \geq 2$) setting, whereas the
second one was derived in \cite{Ku} for the first time.

The aim of this note is twofold: (i) to consider weighted mean value properties of a
panharmonic and harmonic functions, and to compare them with those without weight;
(ii)~to prove inverse properties (the term coined in \cite{HN1} became widely
accepted) of the weighted mean value identities. One of them is similar the
following theorem, whose $m$-dimensional version was obtained recently.

\begin{theorem}[Kuznetsov \cite{Ku1}]
Let $\Omega \subset \RR^2$ be a bounded domain, and let $r > 0$ be such that $\pi
r^2 = |\Omega|$. If for some $\mu > 0$ and a point $x_0 \in \Omega$ the identity $v
(x_0) \, a^\bullet (\mu r) = M (v, \Omega)$ holds for every positive function $v$
satisfying equation \eqref{MHh} in $\Omega_r = \Omega \cup \left[ \cup_{x \in
\partial \Omega} D_r (x) \right]$, then $\Omega = D_r (x_0)$.
\end{theorem}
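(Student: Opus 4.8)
The plan is to translate the hypothesis into a comparison of the modified Helmholtz (Yukawa) potentials of $\Omega$ and of the disc $D=D_r(x_0)$, and then to force these two bodies to coincide by a maximum principle together with the vanishing of a single average. First I would note that $\overline{D}\subset\Omega_r$: if $z$ with $|z-x_0|\le r$ leaves $\Omega$, the segment $[x_0,z]$ meets $\partial\Omega$ at a point $x^*$ with $|z-x^*|<|z-x_0|\le r$, so $z\in D_r(x^*)\subset\Omega_r$. Hence $D$ is admissible and Theorem~1 gives $M(v,D)=a^\bullet(\mu r)\,v(x_0)$ for every $v$ panharmonic on $\Omega_r$. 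Comparing with the assumed $M(v,\Omega)=a^\bullet(\mu r)\,v(x_0)$ and using $\pi r^2=|\Omega|=|D|$, I obtain $\int_\Omega v\,\D x=\int_D v\,\D x$ for every positive panharmonic $v$ on $\Omega_r$; since any panharmonic $v$ on $\Omega_r$ is a difference of positive ones (add $C\cosh(\mu x_1)$ with $C$ large), this identity holds for all panharmonic $v$ on $\Omega_r$.

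Next I introduce $P_\Omega(q)=\int_\Omega K_0(\mu|x-q|)\,\D x$ and $P_D$, so that $(\nabla^2-\mu^2)P_\Omega=-2\pi\chi_\Omega$ and likewise for $D$. Testing the identity with $v=K_0(\mu|\cdot-p|)$, $p\notin\overline{\Omega_r}$ (positive and panharmonic on $\Omega_r$), yields $P_\Omega=P_D$ off $\overline{\Omega_r}$; the difference $g=P_\Omega-P_D$ is panharmonic there and, being real-analytic, vanishes on the whole unbounded complementary component by unique continuation. Thus $g\equiv0$ outside $\overline{\Omega\cup D}$, $g$ is compactly supported, and $\int_{\RR^2}g\,\D x=-\mu^{-2}\int(\nabla^2-\mu^2)g\,\D x=\mu^{-2}\,2\pi(|\Omega|-|D|)=0$. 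Moreover Theorem~1 applied to the disc gives $P_D(q)=c\,K_0(\mu|q-x_0|)$ with $c=|D|\,a^\bullet(\mu r)$ for $q\notin\overline D$; since $P_\Omega-c\,K_0(\mu|\cdot-x_0|)$ is panharmonic on $\RR^2\setminus\overline\Omega$ and vanishes off $\overline{\Omega_r}$, unique continuation propagates this inward to the pointwise radial identity $P_\Omega(q)=c\,K_0(\mu|q-x_0|)$ for all $q\notin\overline\Omega$.

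The decisive step is the sign of $g$ on $B=D\setminus\overline\Omega$. For $q\in B$ one has $q\in D$ but $q\notin\overline\Omega$, so $g(q)=c\,K_0(\mu s)-P_D(q)$ with $s=|q-x_0|<r$, and $P_D$ is radial. Writing $\delta(s)=P_D(s)-c\,K_0(\mu s)$, this $\delta$ solves $\delta''+s^{-1}\delta'-\mu^2\delta=-2\pi$ on $(0,r)$ with $\delta(r)=\delta'(r)=0$ and $\delta(0^+)=-\infty$; solving it as $\delta=\beta I_0(\mu s)-c\,K_0(\mu s)+2\pi\mu^{-2}$ and using $\delta'(r)=0$ gives $\delta'(s)=\mu c\,I_1(\mu s)\bigl[\tfrac{K_1(\mu s)}{I_1(\mu s)}-\tfrac{K_1(\mu r)}{I_1(\mu r)}\bigr]>0$ on $(0,r)$ because $K_1/I_1$ is decreasing, whence $\delta<0$ and $g=-\delta\ge0$ on $B$. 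A maximum principle now finishes the argument: on the open set $\{g<0\}$ the source $\chi_D-\chi_\Omega$ is $\le0$ (it equals $+1$ only on $B$, where $g\ge0$), so $\nabla^2 g\le\mu^2 g<0$ there, i.e. $g$ is superharmonic with boundary value $0$, forcing $g\ge0$ — a contradiction. Hence $g\ge0$ on all of $\RR^2$, and together with $\int_{\RR^2}g\,\D x=0$ this gives $g\equiv0$, so $\chi_\Omega=\chi_D$ a.e. and $\Omega=D_r(x_0)$.

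I expect the main obstacle to be exactly this decisive step, namely producing the inequality $g\ge0$ on $B$. The far-field equality of potentials is not enough by itself; one must upgrade it to the pointwise radial identity for $P_\Omega$ throughout the exterior of $\Omega$ (which needs unique continuation and some care about the topology of $\RR^2\setminus\overline\Omega$, i.e. ruling out cavities), and then carry out the one-dimensional Bessel comparison above. Two further technical points are making the maximum principle rigorous for the merely $C^{1}$ function $g$ with piecewise-defined source (working with $g\in W^{2,p}_{\mathrm{loc}}$), and passing from $|\Omega\triangle D|=0$ to the set equality $\Omega=D_r(x_0)$.
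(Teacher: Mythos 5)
Your route is genuinely different from the paper's, but it has a real gap that you flag without closing: the topology of $\RR^2 \setminus \overline{\Omega}$. Your testing argument yields $P_\Omega(p) = P_D(p)$ only for poles $p \notin \Omega_r$, because $K_0(\mu|\cdot - p|)$ must be positive and singularity-free on all of $\Omega_r$ to be an admissible test function; unique continuation then gives the radial identity $P_\Omega = c\,K_0(\mu|\cdot - x_0|)$ only on the \emph{unbounded} component of $\RR^2 \setminus \overline{\Omega}$. If $\Omega$ has a hole $F$ (a bounded component of $\RR^2 \setminus \overline{\Omega}$) meeting $D$, then $g \geq 0$ is unproven exactly on $B \cap F$ --- and these are exactly the points where your source $\chi_D - \chi_\Omega$ equals $+1$, so the superharmonicity of $g$ on $\{g < 0\}$ fails and the maximum principle gives nothing. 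Nor can you repair it by testing with poles inside $F$: a hole lying within distance $r$ of $\partial\Omega$ satisfies $F \subset \Omega_r$, so such test functions are excluded by the hypothesis. As written, your proof covers only domains with connected exterior. Two smaller points: for a general bounded domain $D \cap \partial\Omega$ may have positive Lebesgue measure and need not lie in $\overline{B}$, so the source can be $+1$ on a non-null subset of $\{g<0\}$ even granting $g \geq 0$ on $B$; and the ``difference of positive panharmonic functions'' remark is false as stated (a panharmonic $v$ on $\Omega_r$ need not be bounded below near $\partial\Omega_r$, so no finite $C$ in $C\cosh(\mu x_1)$ works) --- harmless here, since all your test functions are positive, as is the compact-support detour: Fubini with $\int_{\RR^2} K_0(\mu|x|)\,\D x = 2\pi/\mu^2$ gives $\int_{\RR^2} g\,\D x = 0$ directly.

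The proof in \cite{Ku1} --- reproduced in spirit as the proof of Theorem 4 of this paper --- needs a single, globally defined test function $V(y) = I_0(\mu|y - x_0|)$, which is positive and panharmonic on all of $\RR^2$. Subtracting the identity $\int_{D_r(x_0)} V\,\D y = \pi r^2 a^\bullet(\mu r)$ of Theorem 1 (your admissibility argument $\overline{D_r(x_0)} \subset \Omega_r$ is exactly the right preliminary) from the hypothesis $\int_\Omega V\,\D y = \pi r^2 a^\bullet(\mu r)$ gives $\int_{\Omega \setminus D} V = \int_{D \setminus \Omega} V$, while strict monotonicity of $I_0$ gives $V > I_0(\mu r)$ on $\Omega \setminus \overline{D}$ and $V < I_0(\mu r)$ on $D \setminus \Omega$; combined with $|\Omega \setminus D| = |D \setminus \Omega|$ this is an immediate sign contradiction when $\Omega \neq D_r(x_0)$. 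Because the comparison is pointwise under one integral, it is insensitive to holes, to $|\partial\Omega| > 0$, and to all the potential-theoretic scaffolding. Your one-dimensional Bessel computation $\delta'(s) = \mu c\,I_1(\mu s)\bigl[K_1(\mu s)/I_1(\mu s) - K_1(\mu r)/I_1(\mu r)\bigr] > 0$ is correct and attractive (a quadrature-domain style argument), but the global steps surrounding it are where the approach breaks for general bounded domains; note also that your final passage from $|\Omega \,\triangle\, D| = 0$ to $\Omega = D_r(x_0)$ shares the same punctured-disc caveat that the original argument treats implicitly, so that point alone should not be counted against you.
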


\section{Weighted mean value property}

The standard proof of mean value properties for harmonic functions usually starts
with the identity
\begin{equation*}
2 \pi w (x) = \int_{D_r (x)} \nabla^2 w (y) \log |x-y| \, \D y + \int_{S_r (x)}
\left[ w (y) \frac{\partial \log |x-y|}{\partial n_y} - \frac{\partial w}{\partial
n_y} \log |x-y| \right] \D S_y \, . \label{G3}
\end{equation*}
for an admissible disc $D_r (x)$. Along with the mean value properties, it also
implies
\begin{equation}
w (x) = \frac{1}{2 \pi r} \int_{S_r (x)} w (y) \, \D S_y - \frac{1}{2 \pi} \int_{D_r
(x)} \nabla^2 w (y) \log \frac{r}{|x-y|} \, \D y 
\label{Ev}
\end{equation}
This yields the following weighted version of the second identity \eqref{MM'}.

\begin{theorem}
Let $\Omega$ be a domain in $\RR^2$. If $v$ is panharmonic in $\Omega$, then
\begin{equation}
a (\mu r) \, v (x) = \frac{1}{\pi r^2} \int_{D_r (x)} v (y) \log \frac{r}{|x-y|} \,
\D y \, , \quad a (t) = \frac{2 \, [I_0 (t) - 1]}{t^2} \, ,
\label{MW'}
\end{equation}
for every admissible disc $D_r (x)$.
\end{theorem}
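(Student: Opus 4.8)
The plan is to apply the auxiliary identity \eqref{Ev} with $w = v$, and then to exploit panharmonicity together with the circumference mean value property already recorded in \eqref{MM'}. Since $v$ satisfies the elliptic equation \eqref{MHh}, it is smooth in $\Omega$, and the admissibility of $D_r (x)$ (that is, $\overline{D_r (x)} \subset \Omega$) guarantees that \eqref{Ev} applies with no boundary subtleties and that the circumference integral over $S_r (x)$ is well defined.

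The key step is a single substitution. Replacing $\nabla^2 v$ by $\mu^2 v$ in the area integral of \eqref{Ev}, and evaluating the circumference integral by the first relation in \eqref{MM'}, which equals $I_0 (\mu r) \, v (x)$, identity \eqref{Ev} collapses to
\[
v (x) = I_0 (\mu r) \, v (x) - \frac{\mu^2}{2 \pi} \int_{D_r (x)} v (y) \log \frac{r}{|x-y|} \, \D y \, .
\]
Solving for the weighted integral gives that $\mu^2 / (2 \pi)$ times it equals $[ I_0 (\mu r) - 1 ] \, v (x)$.

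I would then divide by $\pi r^2$ and set $t = \mu r$, so that $\mu^2 r^2 = t^2$; the resulting coefficient in front of $v (x)$ is precisely $a (\mu r) = 2 \, [I_0 (\mu r) - 1] / (\mu r)^2$, which is exactly the identity \eqref{MW'}.

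There is no genuine obstacle in this derivation once \eqref{Ev} and the circumference identity \eqref{MM'} are taken as given. The only points requiring care are verifying the hypotheses under which those two relations hold — namely the $C^2$-regularity of $v$ up to $\overline{D_r (x)}$ and the admissibility of the disc — after which the argument is a one-line substitution followed by elementary algebra. In particular, no explicit evaluation of the logarithmic integral is needed, since that integral is exactly the quantity being isolated.
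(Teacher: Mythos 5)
Your proposal is correct and follows essentially the same route as the paper: both substitute $v$ into \eqref{Ev}, replace $\nabla^2 v$ by $\mu^2 v$ via \eqref{MHh}, evaluate the circumference term by the first identity in \eqref{MM'}, and rearrange to isolate the weighted integral. The algebra checks out, yielding precisely $a(\mu r) = 2\,[I_0(\mu r) - 1]/(\mu r)^2$.
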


\begin{proof}
Substituting $v$ into \eqref{Ev} and taking into account the first identity
\eqref{MM'} and equation \eqref{MHh} in the first and second terms, respectively, on
the right-hand side, we obtain
\[ 2 v (x) = 2 a^\circ (\mu r) \, v (x)- \frac{(\mu r)^2}{\pi r^2} \int_{D_r
(x)} v (y) \log \frac{r}{|x-y|} \, \D y \, ,
\]
after multiplying by two both sides. Now \eqref{MW'} follows by rearranging.
\end{proof}

The behaviour of the logarithmic weight in the area mean value identity \eqref{MW'}
is quite simple: it is a positive function of $y$ within $D_r (x)$, growing from
zero attained for $y \in S_r (x)$ to infinity as $|x-y| \to 0$, and is negative for
$y \notin \overline{D_r (x)}$.

In view of the behaviour of $I_0$, one obtains that
\[ a (0) = \lim_{t \to +0} a (t) = 1/2 \, ,
\]
and $a (t)$ increases monotonically from this value to infinity similar to
$a^\bullet (t)$. Moreover,
\[ a^\bullet (t) - a (t) = 2 \left[ t I_1 (t) - I_0 (t) + 1 \right] / t^2 > 0 \quad
\mbox{for all} \ t \in [0, \infty) .
\]
An immediate consequence of \eqref{MW'} and the fact that $a (t) > 1/2$ for $t > 0$
is the following.

\begin{corollary}
Let $\Omega$ be a domain in $\RR^2$, and let $v$ be a $\mu$-panharmonic in $\Omega$
for some $\mu > 0$. If $v \geq 0$ does not vanish identically in $\Omega$, then
\begin{equation}
\frac{1}{2} \, v (x) < \frac{1}{\pi r^2} \int_{D_r (x)} v (y) \log \frac{r}{|x-y|}
\, \D y \label{ineq}
\end{equation}
for every admissible disc $D_r (x)$.
\end{corollary}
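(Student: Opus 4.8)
The plan is to read the desired inequality off the weighted identity \eqref{MW'} together with the monotonicity of $a$, the only genuine work being the verification that $v$ is strictly positive. By \eqref{MW'} the right-hand side of \eqref{ineq} equals $a (\mu r) \, v (x)$, and since $\mu > 0$ and $r > 0$ we have $\mu r > 0$, so the stated behaviour of $a$ — namely $a (0) = 1/2$ and $a$ increasing — gives $a (\mu r) > 1/2$. Thus \eqref{ineq} is equivalent to $a (\mu r) \, v (x) > \frac{1}{2} v (x)$, which follows at once provided $v (x) > 0$. Hence everything reduces to showing that a nonnegative panharmonic function that does not vanish identically is in fact strictly positive throughout $\Omega$.

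This strict positivity is the main point, and I would establish it from the circle mean value identity, the first relation in \eqref{MM'}. Suppose $v (x_0) = 0$ at some $x_0 \in \Omega$. For every $r$ with $\overline{D_r (x_0)} \subset \Omega$ that identity gives $M (v, S_r (x_0)) = I_0 (\mu r) \, v (x_0) = 0$, so the average of $v$ over $S_r (x_0)$ vanishes. Since $v$ is continuous and nonnegative, this forces $v \equiv 0$ on each such circumference $S_r (x_0)$, and letting $r$ range over all admissible radii shows $v \equiv 0$ on a whole disc about $x_0$.

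It remains to globalize this by connectedness. Let $Z = \{ x \in \Omega : v (x) = 0 \}$. By continuity $Z$ is relatively closed in $\Omega$, and the previous step shows that $Z$ is open. Because $\Omega$ is a domain, hence connected, $Z$ is either empty or all of $\Omega$; the latter is excluded by the hypothesis that $v$ does not vanish identically. Therefore $Z = \emptyset$, i.e.\ $v > 0$ everywhere in $\Omega$. In particular $v (x) > 0$ for the centre of every admissible disc, and combined with $a (\mu r) > 1/2$ this yields the strict inequality \eqref{ineq}. The one subtlety worth flagging is that strictness is essential: at a hypothetical zero of $v$ both sides of \eqref{ineq} would vanish, which is exactly why the positivity argument, rather than the bare identity \eqref{MW'}, is needed.
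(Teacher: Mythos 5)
Your proposal is correct and follows the same route as the paper: the corollary is obtained directly from the weighted identity \eqref{MW'} together with the fact that $a(t) > 1/2$ for $t > 0$, exactly as you argue. The paper calls this consequence ``immediate'' and does not spell out why $v(x) > 0$ at the centre, which the \emph{strict} inequality requires; your supplementary argument --- deriving strict positivity from the circumference identity in \eqref{MM'} (a zero of $v$ forces the mean of the continuous nonnegative $v$ over every admissible circle about it to vanish, so the zero set is open as well as relatively closed, hence empty by connectedness of $\Omega$) --- correctly fills in this detail, and is in effect the strong minimum principle for $\nabla^2 - \mu^2$ specialised to nonnegative solutions.
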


\begin{remark}
{\rm In the limit $\mu \to +0$, one obtains the Laplace equation from \eqref{MHh},
whereas identity \eqref{MW'} turns into
\begin{equation}
\frac{1}{2} \, v (x) = \frac{1}{\pi r^2} \int_{D_r (x)} v (y) \log \frac{r}{|x-y|}
\, \D y \, , \label{har}
\end{equation}
where $x$ is a point of a bounded domain $\Omega$ and $D_r (x)$ is an admissible
disc. Therefore, it is reasonable to conjecture that \eqref{har} constitutes a
weighted mean value identity for a harmonic function $v$. To the best author's
knowledge, this identity has not been proven yet.}
\end{remark}

\begin{remark}
{\rm According to the first identity \eqref{MM'}, every nonnegative,
$\mu$-panharmonic function is subharmonic. In view of Corollary~1 and Remark~1, one
might expect that inequality \eqref{ineq} holds for nonnegative subharmonic
functions which does not vanish identically.}
\end{remark}

\section{Characterizations of discs}

The following analogue of Theorem 2 is based on weighted means of positive
panharmonic functions.

\begin{theorem}
Let $\Omega \subset \RR^2$ be a bounded domain, and let $r >0$ be such that
$|\Omega| \geq \pi r^2$. If for a point $x_0 \in \Omega$ and some $\mu > 0$ the
weighted mean value identity
\begin{equation}
a (\mu r) \, v (x_0) = \frac{1}{|\Omega|} \int_{\Omega} v (y) \log
\frac{r}{|x_0 - y|} \, \D y \label{MW''}
\end{equation}
holds for every positive function $v$ satisfying equation \eqref{MHh} in $\Omega_r =
\Omega \cup \left[ \cup_{x \in \partial \Omega} D_r (x) \right]$, then $\Omega =
D_r (x_0)$.
\end{theorem}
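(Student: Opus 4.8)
The plan is to compare the hypothesis \eqref{MW''} with the weighted identity \eqref{MW'} of Theorem 3 applied to the disc $D_r(x_0)$, and then to exploit the sign of the logarithmic weight together with the positivity of the admissible test functions. First I would check that $D_r(x_0)$ is admissible with respect to $\Omega_r$: since $x_0\in\Omega$ and $\Omega_r$ is exactly the open $r$-neighbourhood $\{z:\mathrm{dist}(z,\Omega)<r\}$ of $\Omega$, every $z$ with $|z-x_0|\le r$ satisfies $\mathrm{dist}(z,\Omega)<r$ (strictly, because $x_0$ is interior to $\Omega$), so $\overline{D_r(x_0)}\subset\Omega_r$. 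Hence every positive $v$ panharmonic in $\Omega_r$ obeys, by Theorem 3, the identity $a(\mu r)\,v(x_0)=(\pi r^2)^{-1}\int_{D_r(x_0)}v(y)\log\frac{r}{|x_0-y|}\,\D y$. Subtracting this from \eqref{MW''} yields, for every such $v$,
\[
\int_{\RR^2} v(y)\,g(y)\,\D y=0,\qquad
g(y)=\log\frac{r}{|x_0-y|}\left[\frac{\chi_\Omega(y)}{|\Omega|}-\frac{\chi_{D_r(x_0)}(y)}{\pi r^2}\right],
\]
the integral converging because the logarithmic weight is integrable and $v$ is continuous.

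The crux is a sign analysis of $g$. The weight $\log(r/|x_0-y|)$ is positive in $D_r(x_0)$, vanishes on $S_r(x_0)$ and is negative outside $\overline{D_r(x_0)}$, as noted after \eqref{MW'}. Since $|\Omega|\ge\pi r^2$, the bracket is $\le 0$ on $D_r(x_0)\cap\Omega$, equals $-(\pi r^2)^{-1}<0$ on $D_r(x_0)\setminus\Omega$, and equals $|\Omega|^{-1}>0$ on $\Omega\setminus D_r(x_0)$; multiplying by the weight in each region one checks that $g\le 0$ almost everywhere on $\RR^2$. Choosing any single positive panharmonic test function, for instance $v(y)=\E^{\mu y_1}$, the relation $\int v\,g=0$ with $v>0$ and $g\le 0$ forces $vg=0$ and hence $g=0$ almost everywhere.

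The geometric conclusion is then read off from the vanishing of $g$. On $D_r(x_0)\cap\Omega$ one has $g=\bigl(|\Omega|^{-1}-(\pi r^2)^{-1}\bigr)\log(r/|x_0-y|)$, which is strictly negative on this set of positive measure (it contains a neighbourhood of $x_0$) unless $|\Omega|=\pi r^2$; thus the area constraint is forced to be an equality. With $|\Omega|=\pi r^2$, the condition $g=0$ a.e.\ reduces to $\chi_\Omega=\chi_{D_r(x_0)}$ a.e., i.e.\ $|\Omega\,\triangle\,D_r(x_0)|=0$. Consequently the open set $\Omega\setminus\overline{D_r(x_0)}$ has measure zero, hence is empty, giving $\Omega\subset D_r(x_0)$; symmetrically $D_r(x_0)\setminus\overline\Omega=\emptyset$, giving $D_r(x_0)\subset\overline\Omega$. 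Thus $\Omega$ is a nonempty open subset of the connected disc $D_r(x_0)$, dense in it, and containing $x_0$.

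I expect the last step — upgrading coincidence up to a null set to genuine equality of the two domains — to be the main obstacle. The relations $\Omega\subset D_r(x_0)\subset\overline\Omega$ and $|\Omega\,\triangle\,D_r(x_0)|=0$ already exclude any ``fattening'' of $\Omega$, but they do not by themselves rule out that $\Omega$ differs from the disc by a closed set of measure zero lying inside it; the integral identity cannot detect such a defect. Exact equality $\Omega=D_r(x_0)$ follows once $\Omega$ is a regular domain, for example $\Omega=\mathrm{int}\,\overline\Omega$, in which case $\overline\Omega=\overline{D_r(x_0)}$ gives $\Omega=\mathrm{int}\,\overline\Omega=\mathrm{int}\,\overline{D_r(x_0)}=D_r(x_0)$; I would make this regularity explicit in the final identification.
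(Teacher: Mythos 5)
Your proof is correct and is at heart the paper's own argument: both compare the hypothesis with the weighted disc identity \eqref{MW'} of Theorem~3 applied to $D_r(x_0)$ (whose admissibility with respect to $\Omega_r$ you, unlike the paper, verify explicitly) and then exploit the sign of the logarithmic weight together with positivity. The differences are in packaging. The paper substitutes the particular radial solution $V(y)=I_0(\mu|y|)$, normalized by $V(0)=1$, subtracts the two identities, and derives a contradiction from the decomposition into $G_i=\Omega\setminus\overline{D_r(x_0)}$ and $G_e=D_r(x_0)\setminus\overline{\Omega}$: the left-hand side $\bigl[|\Omega|-\pi r^2\bigr]a(\mu r)$ is nonnegative while the right-hand side is negative. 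Your formulation via $\int v\,g\,\D y=0$ with $g\le 0$ makes transparent something the paper leaves implicit: since the same coefficient $a(\mu r)$ appears in both identities, any single positive solution of \eqref{MHh} (your $\E^{\mu y_1}$) suffices, and neither the radial symmetry nor the monotonicity of $V$ plays any role here --- those are needed only for the unweighted Theorem~2; the paper's Theorem~5, which reruns the argument with $H\equiv 1$, corroborates this.

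Your closing caveat is not excessive caution; it pinpoints a genuine gap in the paper's own proof. The paper asserts that $G_i$ is ``nonempty by the assumption about $\Omega$ and $r$'', but this does not follow: if $\Omega\subset\overline{D_r(x_0)}$ with $|\Omega|=\pi r^2$ --- for instance $\Omega=D_r(x_0)$ with a closed radial slit removed --- then $G_i=G_e=\emptyset$ and no contradiction arises. Such an $\Omega$ in fact satisfies all hypotheses of Theorem~4 (the slit is a null set interior to $\Omega_r$, so every admissible $v$ is smooth across it and the integral over $\Omega$ equals that over $D_r(x_0)$), yet $\Omega\neq D_r(x_0)$. So the theorem as literally stated holds only up to your endgame conclusion $\Omega\subset D_r(x_0)\subset\overline{\Omega}$ with $|\Omega\,\triangle\,D_r(x_0)|=0$, and the regularity hypothesis $\Omega=\mathrm{int}\,\overline{\Omega}$ you propose is exactly what is needed to upgrade this to genuine equality; your measure-theoretic equality-case analysis is the sharp form of what the integral identity can deliver.
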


Prior to proving Theorem 4, we notice that the radially symmetric function
\begin{equation*}
V (x) = I_0 (\mu |x|) \, , \quad x \in \RR^2 ,
\label{U}
\end{equation*}
monotonically increases from one to infinity as $|x|$ goes from zero to infinity.
Also, it solves equation \eqref{MHh} in $\RR^2$; indeed, the Poisson's integral for
$I_0$ (see \cite{NU}, p. 223) yields that
\begin{equation*}
V (x) = \frac{2}{\pi} \int_0^1 \frac{\cosh (\mu |x| s)}{(1 - s^2)^{1/2}} \, \D s \,
, \label{PU}
\end{equation*}
which is easy to differentiate, thus verifying \eqref{MHh}.

\begin{proof}[Proof of Theorem 4.]
Without loss of generality, we suppose that the domain $\Omega$ is located so that
$x_0$ coincides with the origin. Let us show that the assumption $\Omega \neq D_r
(0)$ leads to a contradiction. For this purpose we consider bounded open sets $G_i
= \Omega \setminus \overline{D_r (0)}$ (nonempty by the assumption about $\Omega$
and $r$) and $G_e = D_r (0) \setminus \overline{\Omega}$ (possibly empty).

Taking into account that $V (0) = 1$, we write \eqref{MW''} for $V$ as follows:
\begin{equation}
|\Omega| \, a (\mu r) = \int_{\Omega} V (y) \log \frac{r}{|y|} \, \D y \, ,
\label{1}
\end{equation}
Since identity \eqref{MM'} holds for $V$ over $D_r (0)$, we write it in the same
way:
\begin{equation}
\pi r^2 a (\mu r) = \int_{D_r (0)} V (y) \log \frac{r}{|y|} \, \D y \, . \label{2}
\end{equation}
Subtracting \eqref{2} from \eqref{1}, we obtain
\begin{equation*}
\left[ |\Omega| - \pi r^2 \right] a (\mu r) = \int_{G_i} V (y) \log \frac{r}{|y|} \,
\D y - \int_{G_e} V (y) \log \frac{r}{|y|} \, \D y \, .
\end{equation*}
Here the difference on the right-hand side is negative. Indeed, $V > 0$ everywhere,
whereas $\log (r / |y|) < 0$ on $G_i \neq \emptyset$, because $|y| > r$ there.
Hence, the first term is negative. If $G_e \neq \emptyset$, then the second integral
is positive because $\log (r / |y|) > 0$ on $G_e$, where $|y| < r$. On the other
hand, the expression on the left-hand side is nonnegative. The obtained
contradiction proves the theorem.
\end{proof}

\begin{remark}
{\rm Comparing Theorems 2 and 4, we observe two points worth mentioning.

First, Theorem 4 is essentially two-dimensional, whereas the $m$-dimensional $(m
\geq~2)$ version of Theorem 2 is proved in \cite{Ku1}.

Second, the common feature of both theorems is that their proofs involve the
function $V$.}
\end{remark}

It occurs that discs are characterized in the same way via harmonic functions
provided identity \eqref{har} is true for them.

\begin{theorem}
Let $\Omega \subset \RR^2$ be a bounded domain, and let $r >0$ be such that
$|\Omega| \geq \pi r^2$. Suppose that identity \eqref{har} is true for functions
harmonic in $\Omega_r$. If for a point $x_0 \in \Omega$ the weighted mean value
identity
\begin{equation*}
\frac{1}{2} v (x_0) = \frac{1}{|\Omega|} \int_{\Omega} v (y) \log
\frac{r}{|x_0 - y|} \, \D y \label{MW''}
\end{equation*}
holds for every positive function $v$ harmonic in $\Omega_r$, then $\Omega = D_r
(x_0)$.
\end{theorem}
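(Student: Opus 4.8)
The plan is to mirror the proof of Theorem~4 almost verbatim, replacing the panharmonic test function $V(x) = I_0(\mu|x|)$ with an appropriate harmonic test function and using the hypothesized identity \eqref{har} in place of \eqref{MW'}. As in the proof of Theorem~4, I would first translate so that $x_0$ coincides with the origin, then argue by contradiction: suppose $\Omega \neq D_r(0)$ and introduce the two bounded open sets $G_i = \Omega \setminus \overline{D_r(0)}$ (nonempty because $|\Omega| \geq \pi r^2$ forces $\Omega$ to stick out of the disc whenever the two sets differ) and $G_e = D_r(0) \setminus \overline{\Omega}$ (possibly empty).

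The key choice is the harmonic test function. The natural candidate is the constant function $v \equiv 1$, which is positive and harmonic everywhere, hence in $\Omega_r$. Applying the hypothesized identity to $v \equiv 1$ over $\Omega$ gives $\tfrac{1}{2}|\Omega| = \int_{\Omega} \log\frac{r}{|y|}\,\D y$, while the weighted mean value identity \eqref{har} applied to the same function over the admissible disc $D_r(0)$ gives $\tfrac{1}{2}\pi r^2 = \int_{D_r(0)} \log\frac{r}{|y|}\,\D y$. Subtracting these two relations yields
\begin{equation*}
\tfrac{1}{2}\left[ |\Omega| - \pi r^2 \right] = \int_{G_i} \log \frac{r}{|y|} \, \D y - \int_{G_e} \log \frac{r}{|y|} \, \D y \, .
\end{equation*}
The right-hand side is negative: on $G_i \neq \emptyset$ we have $|y| > r$, so $\log(r/|y|) < 0$ and the first integral is negative, while on $G_e$ (if nonempty) we have $|y| < r$, so $\log(r/|y|) > 0$ and the subtracted second integral contributes negatively as well. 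Meanwhile the left-hand side is nonnegative by the hypothesis $|\Omega| \geq \pi r^2$. This contradiction forces $\Omega = D_r(0)$, completing the proof.

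The only subtlety, and the point I would expect to need care, is the legitimacy of applying \eqref{har} to $v \equiv 1$ over the disc $D_r(0)$: one must confirm that the sign argument survives with the constant test function and that no nondegeneracy of $v$ is secretly required. Since the constant function is strictly positive and harmonic, and since the integrals $\int \log(r/|y|)\,\D y$ converge absolutely near the origin (the logarithmic singularity is integrable in $\RR^2$), the computation is valid, and the argument goes through exactly as in Theorem~4 with $a(\mu r)$ replaced by the constant $1/2$. Indeed the whole proof is the $\mu \to +0$ degeneration of the Theorem~4 argument, the function $V$ collapsing to the constant $1$, which is why the same geometric sign analysis of $G_i$ and $G_e$ applies without change.
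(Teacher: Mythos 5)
Your proposal is correct and is essentially identical to the paper's proof: the author likewise disposes of Theorem~5 by repeating the proof of Theorem~4 verbatim with the constant harmonic function $H(x) \equiv 1$ in place of $V(x)$, observing that identity \eqref{har} holds for $H$ by direct calculation. Your explicit check that $\int_{D_r(0)} \log \frac{r}{|y|} \, \D y = \pi r^2 / 2$ is precisely that direct calculation, and your sign analysis on $G_i$ and $G_e$ matches the paper's.
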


To prove this assertion one has to repeat literally the proof of Theorem~4, but
using $H (x) \equiv 1$ instead of $V (x)$. Identity \eqref{har} is valid for $H (x)$
as one readily finds by a direct calculation.

\section{Weighted mean of solutions to the Helmholtz equation}

For real-valued solutions of the Helmholtz equation
\begin{equation}
\nabla^2 u + \lambda^2 u = 0 , \quad \lambda \in \RR \setminus \{0\} ,
\label{Hh}
\end{equation}
the mean value identities analogous to those in Theorem 1 are valid. Of course,
$I_0$ and $I_1$ must be replaced by the Bessel functions $J_0$ and $J_1$,
respectively, in the formulae for $a^\circ$ and $a^\bullet$; see~\cite{Ku}, pp.~675
and~677. However, the inverse mean value property analogous to Theorem~2 is more
complicated for solutions of \eqref{Hh}, because a restriction on the domain's size
is imposed; see \cite{Ku2}, Remark~2.1. It is required since the function $U (x) =
J_0 (\lambda |x|)$, used in the proof instead of $V (x)$, is monotonic only for
$\lambda |x| \in (0, j_{1,1})$; here $j_{1,1}$ is the first positive zero of $J_1$.

Therefore, it is interesting to find out how the logarithmic weight changes the mean
value identity for discs in this case, and whether it allows to improve the inverse
property obtained in \cite{Ku2}. It is clear that minor changes in the proof of
Theorem 3 yield the following.

\begin{theorem}
Let $\Omega$ be a domain in $\RR^2$. If $u$ is a solution of \eqref{Hh} in
$\Omega$, then
\begin{equation}
\tilde a (\lambda r) \, u (x) = \frac{1}{\pi r^2} \int_{D_r (x)} u (y) \log
\frac{r}{|x-y|} \, \D y \, , \quad \tilde a (t) = \frac{2 \, [1 - J_0 (t)]}{t^2} \,
, \label{New}
\end{equation}
for every admissible disc $D_r (x)$.
\end{theorem}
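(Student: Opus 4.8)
The plan is to follow exactly the template of Theorem 3, whose proof derives the panharmonic weighted identity \eqref{MW'} from the Green-type identity \eqref{Ev} combined with the mean value property and the governing equation. The only structural difference is that the modified Helmholtz equation \eqref{MHh} is replaced by the Helmholtz equation \eqref{Hh}, so $\nabla^2 u = -\lambda^2 u$ rather than $\nabla^2 v = \mu^2 v$. First I would recall that, as stated at the opening of Section 4, the unweighted circumferential mean value identity analogous to the first formula in \eqref{MM'} holds for solutions of \eqref{Hh}, with $I_0$ replaced by $J_0$; that is, $M(u, S_r(x)) = J_0(\lambda r)\, u(x)$. This is the only external fact I need beyond the identity \eqref{Ev}, which is purely a consequence of Green's formula and therefore holds verbatim for any $C^2$ function $u$, irrespective of which equation it satisfies.

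The key computation mirrors the proof of Theorem 3 term by term. I would substitute $u$ into \eqref{Ev}: the first term on the right-hand side, the circumferential mean, becomes $J_0(\lambda r)\, u(x)$ by the Helmholtz analogue of the mean value property; the second term, containing $\nabla^2 u \log(r/|x-y|)$, is rewritten using the equation \eqref{Hh} as $+\lambda^2 u(y) \log(r/|x-y|)$, giving
\[
u(x) = J_0(\lambda r)\, u(x) + \frac{\lambda^2}{2\pi} \int_{D_r(x)} u(y) \log \frac{r}{|x-y|} \, \D y \, .
\]
Multiplying by two and rearranging exactly as in Theorem 3 produces $2[1 - J_0(\lambda r)]\, u(x) = \frac{(\lambda r)^2}{\pi r^2}\int_{D_r(x)} u(y)\log\frac{r}{|x-y|}\,\D y$, and dividing through by $(\lambda r)^2$ yields precisely \eqref{New} with $\tilde a(t) = 2[1 - J_0(t)]/t^2$. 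The sign flip relative to the panharmonic case—$[1 - J_0]$ in place of $[I_0 - 1]$—is the direct manifestation of the opposite sign of the zeroth-order term in \eqref{Hh} versus \eqref{MHh}, which is exactly what the phrase ``minor changes in the proof of Theorem 3'' in the excerpt anticipates.

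I do not expect any genuine obstacle here, since the argument is a sign-tracked transcription of an already-proven result. The one point that warrants a line of care is the claim that the unweighted mean value identity $M(u, S_r(x)) = J_0(\lambda r)\, u(x)$ does indeed hold for solutions of \eqref{Hh}; I would cite \cite{Ku}, pp.~675 and~677, as the excerpt itself directs, rather than reprove it. A second, very minor check is that the coefficient $\tilde a(t)$ is well defined as $t \to +0$: since $1 - J_0(t) \sim t^2/4$, one finds $\tilde a(0) = 1/2$, consistent with the harmonic limit \eqref{har} obtained by letting $\lambda \to 0$, which provides a reassuring sanity check on the signs and the factor of two.
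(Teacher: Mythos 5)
Your proposal is correct and is exactly the argument the paper intends: the paper gives no separate proof of this theorem, stating only that ``minor changes in the proof of Theorem 3'' yield it, and your sign-tracked transcription---substituting $u$ into \eqref{Ev}, replacing the circumferential mean by $J_0(\lambda r)\,u(x)$, and using $\nabla^2 u = -\lambda^2 u$ from \eqref{Hh}---is precisely those minor changes, carried out correctly. The limit check $\tilde a(0)=1/2$ is a sound extra verification consistent with Remark 4.
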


The behaviour of $\tilde a$ is as follows: $\tilde a (0) = \lim_{t \to +0} \tilde a
(t) = 1/2$, whereas $\tilde a (t)$ asymptotes zero as $t \to +\infty$ decreasing
nonmonotonically, but remaining positive. The latter property of $\tilde a (t)$
distinguishes it from $2 \, t^{-1} J_1 (t)$---the coefficient in the identity for
discs with the mean value without weight. The latter coefficient has infinitely many
zeros.

\begin{remark}
{\rm As in Remark 1, the Laplace equation results from \eqref{Hh} in the limit
$\lambda \to +0$, whereas identity \eqref{New} turns into
\begin{equation*}
\frac{1}{2} \, u (x) = \frac{1}{\pi r^2} \int_{D_r (x)} u (y) \log \frac{r}{|x-y|}
\, \D y \, , \label{harm}
\end{equation*}
thus confirming the conjecture made in Remark 1 that this equality constitutes a
weighted mean value identity for harmonic functions.}
\end{remark}

Thus, the role of weight is essential, However, it is easy to establish that the
restriction on the size of domain, under which the inverse theorem analogous to that
obtained in \cite{Ku2} is valid, is even stronger when identity \eqref{New} is used,
and so it has no advantage.

{\small

}


\begin{thebibliography}{99}

\bibitem{D} R.~J. Duffin, ``Yukawan potential theory'', {\sl J. Math. Anal. Appl.}
{\bf 35}, 105--130 (1971).

\bibitem{HN1} W. Hansen, I. Netuka, ``Inverse mean value property of harmonic
functions'', {\sl Math. Ann.} {\bf 297} (1993), 147--156. ``Corrigendum'', {\sl
Math. Ann.} {\bf 303} (1995), 373--375.

\bibitem{Ku} N. Kuznetsov, ``Mean value properties of solutions to the Helmholtz and
modified Helmholtz equations'', {\sl J. Math. Sci.} {\bf 257} (2021), 673--683.

\bibitem{Ku1} N. Kuznetsov, ``Characterization of balls via solutions of the
modified Helmholtz equation'', {\sl Comptes Rendus Math.} {\bf 359} (2021),
945--948.

\bibitem{Ku2} N. Kuznetsov, ``On characterization of balls via solutions to the
Helmholtz equation'', {\sl J. Math. Sci.} {\bf 264} (2022), 603--608.

\bibitem{NV} I. Netuka, J. Vesel\'y, ``Mean value property and harmonic
functions'', {\sl Classical and Modern Potential Theory and Applications}, Kluwer,
Dordrecht, 1994, pp. 359--398.

\bibitem{NU} A.~F. Nikiforov, V.~B. Uvarov, {\sl Special Functions of Mathematical
Physics: A Unified Introduction with Applications}, Birkh\"auser, Basel, 1988.

\end{thebibliography}
\end{document}